\newcommand{\RR}{\mathbb{R}}
\DeclareMathOperator{\rank}{rank}
\DeclareMathOperator{\diag}{diag}
\DeclareMathOperator{\spann}{span}
\newtheorem{theorem}{Theorem}
\newcommand{\ZZ}{\mathbb{Z}}
\def\hu{{\hat u}}
\def\bone{{\boldsymbol 1}}
\newcommand{\red}[1]{{\relax}}    %
\begin{document}

\title{Selection of Sparse Sets of Influence for Meshless Finite Difference Methods}%

\author{Oleg Davydov\thanks{Department of Mathematics, University of Giessen, Arndtstrasse 2, 
35392 Giessen, Germany, \tt{oleg.davydov@math.uni-giessen.de}}}%

\maketitle

\begin{abstract}
 We suggest an efficient algorithm for the selection of sparse subsets of a set of influence for the numerical discretization of differential
operators on irregular nodes with polynomial consistency of a given
order with the help of the QR decomposition of an appropriately weighted polynomial collocation matrix, and prove that the accuracy of the  resulting numerical
differentiation formulas is comparable with that of the formulas generated on the original set of influence.

\end{abstract}

\section{Introduction}\label{intro}
Meshless finite difference methods discretize a boundary value problem
\begin{align}\label{pr}
	\begin{split}
 	L u &= f\; \text{in}\; \Omega, \\
	Bu &= g \; \text{in}\; \partial\Omega,
	\end{split}
\end{align}
with the help of numerical differentiation formulas of the type
\begin{equation}\label{ndif} 
D u(z)\approx\sum_{j=1}^{n} w_j u( y_j ), \quad z,\,y_1,\ldots,y_n\in \RR^d,
\end{equation}
on an irregular set of nodes $Y=\{y_1,\ldots,y_n\}$,
where  $D$ is a linear differential operator 
\begin{equation}\label{Dop}  
Du=\sum_{\alpha\in\mathbb{Z}_+^d\atop|\alpha|\le \kappa}c_\alpha\partial^\alpha u,
\quad
\partial^\alpha:=\frac{\partial^{|\alpha|}}{\partial x^{\alpha}}=
\frac{\partial^{|\alpha|}}{\partial x_1^{\alpha_1}\cdots\partial x_d^{\alpha_d}},
\quad |\alpha|=\alpha_1+\cdots+\alpha_d,
\end{equation}
with variable coefficients $c_\alpha$ and order $\kappa=\kappa(D)$. Usually, one or more such operators are associated with a given problem \eqref{pr}  
by linearizing $L$ and $B$ and extracting their parts of different character, such as 
the diffusion or convection term, see for example
Both $z$ and and its  \emph{set of influence} $Y$ belong to a finite set $X=\{x_1,\ldots,x_N\}$ of (unconnected) nodes
that discretize the whole domain $\overline\Omega$, and a discrete solution $\hu\in\RR^N$ is sought 
as an approximation to $u|_X$.

For example, the Dirichlet problem for the Poisson equation ($L=\Delta$ and $B=I$) can be discretized by numerical
differentiation of the Laplacian 
$$
\Delta u(x_i)\approx\sum_{j\in J_i} w_{ij}\, u( x_j ),\quad J_i\subset \{1,\ldots,N\},$$
and $\hu$ obtained by solving the linear system
\begin{align*}
\sum_{j\in J_i} w_{ij}\, \hu_j = f(x_i),\quad x_i\in\Omega;\qquad
\hu_i = g(x_i),\quad x_i\in\partial\Omega,
\end{align*}
where $w_{ij}:=0$ whenever $j\notin J_i$.

Similar to the classical finite difference method, the error of the formula \eqref{ndif} plays the role of the consistency or
the local discretization error. This error may be reduced by choosing larger sets of influence $X_i=\{x_j:j\in J_i\}$ and a higher order
numerical differentiation method, giving rise to a higher convergence order of the numerical solution $\hu$. 
 It is however important to avoid unnecessarily large sets of influence that do not significantly reduce the 
consistency error of \eqref{ndif}.
Smaller sets of influence lead to sparser linear systems to be solved for
$\hat u$. For example, sets of influence consisting of just 7 points are generated by the algorithms suggested in 
\cite{DavyOanh11,OanhDavyPhu17} for elliptic problems, which helps to produce adaptive methless methods that compete with the
piecewise linear finite elements in terms of both accuracy and sparsity of the system matrix. 
However, the algorithms of \cite{DavyOanh11,OanhDavyPhu17}  are geometric in nature and therefore seem difficult to extend to higher
order methods.

Most work on meshless finite difference methods relies on selecting the sets of influence in a
very simple way by forming $X_i$ from an \emph{ad hoc} number of nearest neighbors of $x_i$, see e.g.~\cite{BFFB17}.  
This approach works well and produces relatively small sets of influence when the global node set
$X$ is carefully generated
(\emph{node generation}: the counterpart of mesh generation in mesh based methods). Several node generation methods 
have been developed.
On the other hand, one of the main goals of meshless methods is to avoid sophisticated mesh generation. It is therefore
desirable to develop approaches that let meshless finite differences perform well also on nodes generated by simple
methods allowing local irregularities that would lead to a severe diteriation in the performance of mesh-based methods.

When node generation is inexpensive and the set $X$ is suboptimal, then it is usually possible to obtain acceptable consistency error by selecting larger sets
of influence in the locations affected by irregularities. However, if the sets of influence are controlled by a single
number of nearest neighbors, then the method  unneccesarily uses too many nodes in locations where the neighborhood is more
regular than in the worst locations, which leads to unneccesary increased density of the system matrix $[w_{ij}]_{ij}$. 
In this case the number of nearest neighbors that guarantee good numerical differentiation error 
may be too high, and
selection of sutable small subsets particularly important.

In this paper we discuss how to reduce the size of a set of influence while keeping essentially the 
same consistency error achieved on the original set. In particular, we suggest a new efficient method for the calculation of
sparse weights based on pivoted QR factorization of the polynomial collocation matrices.

\section{Consistency error estimates}

It has been shown in \cite{DavySchaback16,DavySchaback18} that the error of the kernel-based formulas \eqref{ndif}  as well as 
certain (\emph{minimal}) polynomial type formulas can be bounded by the \emph{growth function} 
$$
\rho_{q,D}(z,Y)=\sup\big\{D p(z):p\in\Pi^d_q,\;|p(y_j)|\le\|y_j-z\|_2^{q},\;
j=1,\ldots,n\big\}$$
times a factor depending on the smoothness of $f$ and independent of the geometry of the set of influence 
$Y=\{y_1,\ldots,y_n\}$.
Here $\Pi^d_{q}$ denotes the space of all $d$-variate polynomials of order 
at most $q$, i.e.\ of total degree less than $q$, with $\Pi^d_{0}:=\{0\}$. 

In particular, in the polynomial case a duality theorem shows that
$\rho_{q,D}(z,Y)$ is the minimum of
\begin{equation}\label{ell1}
\|w\|_{1,q}:=\sum_{j=1}^{n} |w_j|\|y_j-z\|_2^{q}
\end{equation}
subject to the exactness condition
\begin{equation}\label{exact}
D p(z)=\sum_{j=1}^{n} w_j p( y_j ) \quad\hbox{for all}\quad  p\in \Pi^d_{q}.
\end{equation}
On the other hand, the following error bound holds for any formula \eqref{ndif} satisfying \eqref{exact} 
with $q>\kappa(D)$, and all $f\in C^{q-1}(\Omega)$ with Lipschitz continuous derivatives of order $q$,
\begin{equation}\label{error}
|D f(z)-\sum_{j=1}^{n} w_j f( y_j )|\le \sum_{j=1}^{n} |w_j|\|y_j-z\|_2^{q}\,|f|_{q,\Omega},
\end{equation}
where $\Omega\subset\RR^d$ is any domain that 
contains the set   
\begin{equation}\label{SzX}
S_{z,Y}:=\bigcup_{i=1}^n[z,y_i],\qquad
[x,y]:=\{\alpha x+(1-\alpha)y: 0\le\alpha\le 1\},
\end{equation}
and 
\begin{equation}\label{Hoe}
|f|_{q,\Omega}:=\frac{1}{q!}
\Big(\sum_{|\alpha|=q-1}{\textstyle{q-1\choose \alpha}}|\partial^\alpha f|_{1,\Omega}^2\Big)^{1/2},
\quad |f|_{1,\Omega}:=\sup_{x,y\in\Omega\atop x\ne y}\frac{|f(x)-f(y)|}{\|x-y\|_2}.
\end{equation}

It follows that for the \emph{$\ell_1$-minimal} formula of order $q>\kappa(D)$ whose weight vector 
$w^*=[w^*_1,\ldots,w^*_n]^T$ is computed by minimizing
$\|w\|_{1,q}$ subject to \eqref{exact},
\begin{equation}\label{ell1er}
|D f(z)-\sum_{j=1}^{n} w^*_j f( y_j )|\le\rho_{q,D}(z,Y)|f|_{q,\Omega},
\end{equation}
which is the best bound obtainable from \eqref{error}.

For the \emph{$\ell_2$-minimal} formula with weights $w^{**}_j$, $j=1,\ldots,n$, 
obtained by minimizing
\begin{equation}\label{ell2}
\|w\|_{2,q}^2:=\sum_{j=1}^{n} w_j^2\|y_j-z\|_2^{2q}
\end{equation}
subject to \eqref{exact}, the error bound of \cite{DavySchaback18} is  worse only by the factor $\sqrt{n}$,
\begin{align}
\label{ell2er}
|D f(z)-\sum_{j=1}^{n} w^{**}_j f( y_j )|&\le\sqrt{n}\,\rho_{q,D}(z,Y)|f|_{q,\Omega}.
\end{align}

Moreover, the growth function concides with $\|w^*\|_{1,q}$, as mentioned before, and 
it can be estimated with the help of $\|w^{**}\|_{2,q}$:
\begin{equation}\label{rhoest}
\rho_{q,D}(z,Y)=\|w^*\|_{1,q},\qquad 
\|w^{**}\|_{2,q}\le\rho_{q,D}(z,Y)\le \sqrt{n}\,\|w^{**}\|_{2,q}.
\end{equation}

Note that $\ell_2$-minimal formulas can be interpreted as obtained by differentiating a least squares polynomial 
of order $q$ to
the data at $Y$, with weights  $\|y_j-z\|_2^{-2q}$, see \cite[Section 5]{DavySchaback18}, which has been frequently used in meshless finite 
difference methods, albeit usually with different weights that do not satisfy the error bound
\eqref{ell2er}. The $\ell_1$-minimal formulas have been considered in 
\cite{Seibold08,BaMoKi2011} for the Laplacian operator $D=\Delta$,  with an additional requirement of positivity that 
ensures the $L$-matrix property of the system matrix for the Poisson problem, but may only
be satisfied for $q\le4$, see also \cite[Section 4]{DavySchaback18}.

\section{Sparse subsets of sets of influence}
Since the growth function is monotone decreasing with respect to $Y$, that is
\begin{equation}\label{monot}
\rho_{q,D}(z,Y'')\le \rho_{q,D}(z,Y')\quad \text{if}\quad Y'\subset Y'',
\end{equation}
the estimates \eqref{ell1er} and \eqref{ell2er} and their kernel-based counterparts in \cite{DavySchaback16} generally 
improve when larger sets of neighbors are used.

A simple way to produce a set of influence $Y=X_i$ is by selecting 
a certain number $m$ of nearest neighbors of $z=x_i$ in $X$, where 
the size $m$ is a sufficiently large number choosen on the basis of experience  depending on the number of variables
$d$ and expected convergence order. For numerical differentiation that involves polynomials or order $q$, 
$m$ is typically choosen to be at least the double of 
$$
\nu_{q,d}:={q-1+d\choose d}=\dim \Pi^d_q,$$ 
since relying on a number  of  nearest neighbors less than or only slightly exceeding $\nu_{q,d}$
risks low consistency order or numerical instability even for geometrically 
nicely distributed node sets. If node generation is performed by less sophisticated algorithms, then the number of
nearest neighbors needed to guarantee a good consistency error may even be significanly higher than $2\nu_{q,d}$.

Since the density of the system matrix is determined by the sizes of the sets of influence, it is natural to try to reduce
these sizes whenever possible if this does not cause a significant increase of the consistency error. 
In view of the role of the growth function as an indicator of the consistency error obtainable on a given infuence set, we
consider the problem of finding a significantly smaller subset $\tilde Y$ of a given set of influence 
$Y=\{y_1,\ldots,y_m\}$ such that
\begin{equation}\label{rhob}
\rho_{q,D}(z,\tilde Y)\le C\rho_{q,D}(z,Y)
\end{equation}
for some small constant $C\ge 1$.

In fact, $\ell_1$-minimal formulas already generate a subset $Y^*\subset Y$ of size $|Y^*|\le\nu_{q,d}$ with
$\rho_{q,D}(z,Y^*)=\rho_{q,D}(z,Y)$ as soon as $\rho_{q,D}(z,Y)<\infty$. Indeed, many weights $w_j$ vanish when
\eqref{ell1} is minimized suject to \eqref{exact} because this optimization problem can be interpeted as a linear program,
see \cite[Section 4]{DavySchaback18} for more details. The papers \cite{Seibold08,BaMoKi2011} have applied so obtained 
sparse positive formulas and subsets $\tilde Y$ in the meshless finite difference method. 

Moreover, $\ell_1$-minimal formulas are often of even smaller size than $\nu_{q,d}$ if the set $Y$ allows this, for example if
$d=2$, $D=\Delta$, $q=3$ or 4, $z\in Y$, and $Y$ contains a sufficiently localized 5-star subset centered at $z$. In this case
classical 5-point stencil of the finite difference method is automatically recovered. 

Unfortunately, $\ell_1$-minimal formulas are relatively expensive to compute and numerical methods for them are not always
reliable. Therefore we suggest an alternative, significantly more efficient method of
selecting a sparse subset $\tilde Y\subset Y$ satisfying \eqref{rhob} with a constant $C$ estimated in Theorem~\ref{pqrbound}. 

We assume without loss of generality that 
$$
z=0 \quad\text{and}\quad z\notin\{y_2,\ldots,y_m\},$$
 which still allows $z=y_1$. Let $p_1,\ldots,p_\nu$ be a basis for $\Pi^d_{q}$,
$$
\Pi^d_{q}=\spann\{p_1,\ldots,p_\nu\},\quad \nu=\nu_{q,d},$$
with 
\begin{equation}\label{p1cond}
p_1\equiv1\text{ and } p_2(z)=\cdots=p_\nu(z)=0\quad\text{if}\quad z=y_1.
\end{equation}
In particular, 
after an appropriate translation and scaling of the coordinate system of $\RR^d$ the monomial basis
$$
y^\alpha,\quad \alpha\in\ZZ^d_+,\quad |\alpha|:=\sum_{i=1}^d \alpha_i< q,$$
may be used since the growth function is scale invariant \cite[Section 2]{DavySchaback18}, see also a discussion of 
the scalability of numerical diffrentiation formulas in 
\cite{DavySchaback18,DavySchaback19}. 

The exactness condition \eqref{exact} is equivalent to the system of linear equations
\begin{equation}\label{Ab}
Aw=b,\quad\text{with}\quad
A:=[p_i(y_j)]_{i,j=1}^{\nu,m}\in\RR^{\nu\times m},\quad b:=[Dp_i(z)]_{i=1}^m\in\RR^m,
\end{equation}
which is consistent if and only if $\rho_{q,D}(z,Y)<\infty$ \cite[Theorem 9]{DavySchaback18}. 
Typically (but not necessarily) $m\ge\nu$. 
An efficient method for computing a sparse solution of a consistent linear system is to employ the QR factorization 
of $A$ with column pivoting, see  \cite[Section 12.2.1]{GoVanL96}. It produces $w$ with at most $\rank(A)$ nonzero
components and has been successfully applied to the multivariate Vandermonde matrices 
in order to select good points for polynomial interpolation on domains \cite{SoVia09}. Applied to $A$ directly, 
this method however does not seem to produce useful sets of influence for mesless finite difference methods.

We suggest to apply a pivoted QR factorization after rescaling the system \eqref{Ab} 
with the help of the diagonal matrix
$$
\Theta:=\diag(\theta_1,\ldots,\theta_m),\qquad \theta_j=\|y_j-z\|_2^{-q},\quad j=1,\ldots,m.$$

We assume that $\rho_{q,D}(z,Y)<\infty$ such that \eqref{Ab} has at least one solution.
If $z\ne y_1$, then we transform the linear system $Aw=b$ in the form
$$
\tilde A v=b,\qquad\tilde A=A\Theta,\quad w=\Theta v,$$
and compute a QR factorization of $\tilde A$ with column pivoting,
$$ %
\tilde AP=Q
\begin{bmatrix} 
A_1 & A_2\\ 
0 & 0
\end{bmatrix},
$$ %
where $P$ is a permutation matrix, $Q$ an orthogonal matrix and $A_1\in\RR^{r\times r}$ is upper triangular and nonsingular,
with $r:=\rank(\tilde A)=\rank(A)$ \cite[Section 5.4.1]{GoVanL96}. Let $s$ be the largest index of the nonzero components 
of $Q^Tb$, such that
\begin{equation}\label{ndef}
Q^Tb=
\begin{bmatrix} 
\tilde b \\ 
0 
\end{bmatrix}
\qquad\text{with}\quad \tilde b\in\RR^s.
\end{equation}
Since \eqref{Ab} is consistent, the last $\nu-r$ components
of $Q^Tb$ must be zero, hence $s\le r$. We rewrite $\tilde AP$ in the form
\begin{equation}\label{pqr}
\tilde AP=Q
\begin{bmatrix} 
R_1 & R_2\\ 
0 & T
\end{bmatrix},\qquad R_1\in \RR^{s\times s},
\end{equation}
with an upper triangular and nonsingular matrix $R_1$.
Then the equation $\tilde A v=b$ is equivalent to
\begin{equation}\label{pqre}
\begin{bmatrix} 
R_1 & R_2\\ 
0 & T
\end{bmatrix}
\tilde v=
\begin{bmatrix} 
\tilde b \\ 
0 
\end{bmatrix},\qquad 
v=P\tilde v,\quad
\end{equation}
which has a sparse solution $\tilde v^\circ$ with at most $s$ nonzero components determined by the conditions
\begin{equation}\label{pqrs}
R_1[\tilde v^\circ_i]_{i=1}^s = \tilde b,\quad [\tilde v^\circ_i]_{i=s+1}^m=0.
\end{equation}
Then the vector
$$
w^\circ:=\Theta P\tilde v^\circ$$
also has at most $s\le \rank(A)$ nonzero components and satisfies \eqref{Ab}. We denote by $Y^\circ$ the subset of $Y$ 
that corresponds to the nonzero components of the vector $w^\circ$.

In the case $z=y_1$ we have assumed \eqref{p1cond}, in particular  $p_1\equiv 1$. Hence by \eqref{Dop} $Dp_1(z)=c_0(z)$. We 
replace $Aw=b$ by the equivalent equations
\begin{align*}
\sum_{j=1}^mw_j&=c_0(z),\\
\tilde A v&=b',\qquad 
w=\begin{bmatrix} 
w_1 \\ 
\Theta v
\end{bmatrix},
\end{align*}
where in this case
$$
\tilde A:=A'\Theta,\qquad A':=[p_i(y_j)]_{i,j=2}^{\nu,m},\quad 
b=\begin{bmatrix} 
c_0(z) \\ 
b' 
\end{bmatrix}$$
and
$$
\Theta:=\diag(\theta_2,\ldots,\theta_m),\qquad \theta_j=\|y_j-z\|_2^{-q},\quad j=2,\ldots,m.$$
Note that $\rank(\tilde A)=\rank(A')=\rank(A)-1$ thanks to \eqref{p1cond}.
After computing a pivoted QR factorization of $\tilde A$ in the form \eqref{pqr}, and 
a solution $\tilde v^\circ$ of \eqref{pqre} satisfying \eqref{pqrs}, we obtain 
$\tilde w=[\tilde w_j]_{j=2}^m:=\Theta P\tilde v^\circ\in\RR^{m-1}$, and
the vector
$$
w^\circ:=\begin{bmatrix} 
c_0(z)-\sum_{j=2}^{m}\tilde w_j \\ 
\tilde w 
\end{bmatrix}$$
satisfies \eqref{Ab} and has at most $s+1\le \rank(\tilde A)+1=\rank(A)$ nonzero components,
where $s$ is the largest index of nonzero components of $Q^Tb'$. 
The subset of $Y$ 
that corresponds to the nonzero components of the vector $w^\circ$ is again denoted $Y^\circ$. Note that in this case
$z=y_1\in Y^\circ$.

The rank of the matrix $A$ is independent of the choice of the basis $p_1,\ldots,p_\nu$ of $\Pi^d_q$, and we denote it
$r_q(Y).$ The following theorem provides a bound  of the type \eqref{rhob} for the subset $Y^\circ$.

\begin{theorem}\label{pqrbound} Assume that $\rho_{q,D}(z,Y)<\infty$. Let $Y^\circ\subset Y$ be constructed with the help of
a pivoted QR factorization as described above, and let $w^\circ$ denote the corresponding weght vector. Then
\begin{align}
\label{pqrb1}
\|w^\circ\|_{2,q}&\le (1+\|R_1^{-1}R_2\|_2^2)^{1/2}\|w^{**}\|_{2,q},\\
\label{pqrb2}
\rho_{q,D}(z,Y^\circ)&\le n^{1/2}(1+\|R_1^{-1}R_2\|_2^2)^{1/2}\rho_{q,D}(z,Y),
\end{align}
where the matrices $R_1,R_2$ are defined by \eqref{pqr}, and $n=|Y^\circ|\le r_q(Y)$ is the number of 
nonzero components of $w^\circ$.
\end{theorem}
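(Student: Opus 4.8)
The plan is to reduce the whole statement to the elementary geometry of the substitution $w=\Theta v$, under which the weighted norm trivializes. Since $\theta_j=\|y_j-z\|_2^{-q}$, the identity $(\theta_j v_j)^2\|y_j-z\|_2^{2q}=v_j^2$ gives $\|w\|_{2,q}=\|v\|_2$ for every $w=\Theta v$. Hence minimizing $\|w\|_{2,q}$ subject to the exactness condition \eqref{exact}, i.e.\ subject to $Aw=b$, is the same as minimizing $\|v\|_2$ subject to $\tilde A v=b$, so that $w^{**}=\Theta v^{**}$ with $v^{**}$ the minimum-norm solution, and $\|w^{**}\|_{2,q}=\|v^{**}\|_2=\|\tilde v^{**}\|_2$ after the orthogonal permutation $P$. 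The sparse vector is $w^\circ=\Theta P\tilde v^\circ$, so by \eqref{pqrs} we likewise have $\|w^\circ\|_{2,q}=\|\tilde v^\circ\|_2=\|R_1^{-1}\tilde b\|_2$.

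For \eqref{pqrb1} I would compare $\tilde v^\circ$ with an arbitrary solution $\tilde v$ of \eqref{pqre}, partitioned into its leading part $\tilde v_1\in\RR^s$ and the remaining components $\tilde v_2$. The first block of rows of \eqref{pqre} reads $R_1\tilde v_1+R_2\tilde v_2=\tilde b$, whence $R_1^{-1}\tilde b=\tilde v_1+R_1^{-1}R_2\tilde v_2$. Applying the triangle inequality and then the Cauchy--Schwarz inequality to the pair $(1,\|R_1^{-1}R_2\|_2)$ and $(\|\tilde v_1\|_2,\|\tilde v_2\|_2)$ yields $\|\tilde v^\circ\|_2=\|R_1^{-1}\tilde b\|_2\le\|\tilde v_1\|_2+\|R_1^{-1}R_2\|_2\,\|\tilde v_2\|_2\le(1+\|R_1^{-1}R_2\|_2^2)^{1/2}\|\tilde v\|_2$. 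Minimizing the right-hand side over all solutions replaces $\|\tilde v\|_2$ by $\|\tilde v^{**}\|_2$, giving \eqref{pqrb1}. A pleasant feature is that this argument never touches the block $T$ or its kernel: the single identity coming from the first block of rows is all that is needed.

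The bound \eqref{pqrb2} then follows by chaining three inequalities. Because $w^\circ$ is supported on $Y^\circ$ and satisfies \eqref{exact}, it is an admissible formula on $Y^\circ$, and since by \eqref{rhoest} the growth function $\rho_{q,D}(z,Y^\circ)$ is the minimum of $\|\cdot\|_{1,q}$ over such formulas, $\rho_{q,D}(z,Y^\circ)\le\|w^\circ\|_{1,q}$. The elementary Cauchy--Schwarz bound $\|w^\circ\|_{1,q}\le n^{1/2}\|w^\circ\|_{2,q}$ over the $n=|Y^\circ|$ nonzero entries, followed by \eqref{pqrb1} and the left inequality $\|w^{**}\|_{2,q}\le\rho_{q,D}(z,Y)$ of \eqref{rhoest} for the full set $Y$, produces exactly \eqref{pqrb2}, with $n\le r_q(Y)$ guaranteed by the construction.

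I expect the only real obstacle to be bookkeeping in the case $z=y_1$, where the construction peels off the first weight through $\sum_{j}w_j=c_0(z)$ and works with the reduced data $A'$, $\Theta=\diag(\theta_2,\dots,\theta_m)$ and $b'$. Here one must verify that the first weight, determined last and absent from the exactness system $\tilde A v=b'$, genuinely does not enter the norm $\|\cdot\|_{2,q}$; it does not, since $\|y_1-z\|_2^{2q}=0$, so the norm identity and the minimum-norm interpretation of $w^{**}$ survive unchanged, while \eqref{p1cond} yields the support count $|Y^\circ|\le s+1\le r_q(Y)$. Apart from this accounting the proof is nothing more than the two elementary inequalities above.
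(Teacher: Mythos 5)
Your proof is correct, and it follows the paper's overall skeleton: pass to the variables $v=\Theta^{-1}w$ so that $\|w\|_{2,q}=\|v\|_2$, identify $w^{**}$ with the minimal 2-norm solution of $\tilde A v=b$, compare $\tilde v^\circ$ with $\tilde v^{**}$, deduce \eqref{pqrb2} from \eqref{pqrb1} via Cauchy--Schwarz and \eqref{rhoest}, and reduce the case $z=y_1$ to the other case by observing that $w_1$ enters neither the norm $\|\cdot\|_{2,q}$ nor the reduced exactness system. The genuine difference is in how the core estimate \eqref{pqrb1} is established. The paper parametrizes the constraint $T\tilde v_2=0$ by an orthonormal basis $S$ of the null space of $T$, rewrites \eqref{pqre} as the unconstrained underdetermined system
$\begin{bmatrix} R_1 & R_2S\end{bmatrix}\begin{bmatrix}\tilde v_1\\ \hat v\end{bmatrix}=\tilde b$,
and then cites the basic-versus-minimal-norm-solution bound of Golub and Van Loan (Section 5.5.6), which yields the intermediate constant $(1+\|R_1^{-1}R_2S\|_2^2)^{1/2}$ before relaxing it via $\|S\|_2=1$. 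You instead obtain the bound directly from the first block row of \eqref{pqre}: $R_1^{-1}\tilde b=\tilde v_1+R_1^{-1}R_2\tilde v_2$, then triangle inequality plus Cauchy--Schwarz give $\|\tilde v^\circ\|_2\le(1+\|R_1^{-1}R_2\|_2^2)^{1/2}\|\tilde v\|_2$ for \emph{every} solution $\tilde v$, in particular for $\tilde v^{**}$. This is self-contained (no external citation, no null-space machinery, no use of the block $T$ at all) and is essentially a direct proof of the very lemma the paper quotes; what the paper's route buys in exchange is the slightly sharper intermediate constant involving $R_2S$ rather than $R_2$, which it in any case discards in the final statement. Your derivation of \eqref{pqrb2} is likewise a minor variant: you use the duality $\rho_{q,D}(z,Y^\circ)\le\|w^\circ\|_{1,q}$ together with $\|w^\circ\|_{1,q}\le n^{1/2}\|w^\circ\|_{2,q}$, whereas the paper notes that $w^\circ$ restricted to $Y^\circ$ is the unique (hence $\ell_2$-minimal) admissible vector on $Y^\circ$ and applies the right-hand inequality of \eqref{rhoest}; the two arguments are the same estimate unwound in different ways, and yours has the small advantage of not needing the uniqueness claim. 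Your treatment of $z=y_1$ is terse but identifies the correct crux, namely that $\|y_1-z\|_2=0$ makes the norm identity and the minimum-norm characterization of $w^{**}$ survive the reduction to $A'$, $b'$, exactly as the paper verifies in detail.
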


\begin{proof}
We first consider the case when $z\ne y_1$. Let $w^{**}$ be the $\ell_2$-minimal weight vector that minimizes \eqref{ell2}
subject to \eqref{Ab}. Then $v^{**}=\Theta^{-1}w^{**}$ is the minimal 2-norm solution of $\tilde Av=b$ since 
$\|v^{**}\|_2=\|w^{**}\|_{2,q}$, and $\tilde v^{**}=P^{-1}v^{**}$ is the minimal 2-norm solution of
\eqref{pqre}. We write any solution $\tilde v$ of \eqref{pqre} in the form
$$ 
\tilde v=
\begin{bmatrix} 
\tilde v_1 \\ 
\tilde v_2 
\end{bmatrix},\quad \tilde v'\in\RR^{s\times s},\qquad
\text{in particular}\quad
\tilde v^{**}=\begin{bmatrix} 
\tilde v^{**}_1 \\ 
\tilde v^{**}_2 
\end{bmatrix},\quad
\tilde v^\circ=\begin{bmatrix} 
\tilde v^\circ_1 \\ 
0 
\end{bmatrix}.$$
The condition $T \tilde v_2=0$ is equivalent to $\tilde v_2= S\hat v$ for a suitable $\hat v\in\RR^\ell$, where the columns of $S$ form an
orthonormal basis for the null space $N(T)$ of $T$, and $\ell$ is the dimension of $N(T)$. In particular,
$\tilde v^{**}_2= S\hat v^{**}$. Hence \eqref{pqre} is equivalent to
$$
\begin{bmatrix} 
R_1 & R_2S
\end{bmatrix}
\begin{bmatrix} 
\tilde v_1 \\ 
\hat v 
\end{bmatrix}= 
\tilde b. 
$$
Since $\|S\hat v\|_2=\|\hat v\|_2$, we have $\|\tilde v^{**}\|_2^2=\|\tilde v^{**}_1\|_2^2+\|\hat v^{**}\|_2^2$.
Therefore $\begin{bmatrix} 
\tilde v^{**}_1 \\ 
\hat v^{**} 
\end{bmatrix}$ 
is the minimal 2-norm solution of the equation in the last display, and by \cite[Section 5.5.6]{GoVanL96}, we obtain
$$ 
\|\tilde v^\circ\|_2\le(1+\|R_1^{-1}R_2S\|_2^2)^{1/2}\|\tilde v^{**}\|_2.$$
Since $\|\tilde v^\circ\|_2=\|w^\circ\|_{2,q}$, $\|\tilde v^{**}\|_2=\|w^{**}\|_{2,q}$ and
$\|S\|_2= 1$, \eqref{pqrb1} follows.
Since the components of $w^\circ$ corresponding to $y_j\in Y^\circ$ 
form the only solution of $A^\circ w=b$, where
$$
A^\circ:=[p_i(y_j)]_{i=1,\,y_j\in Y^\circ}^{\nu},$$%
it follows by \eqref{rhoest} that
$$
\rho_{q,D}(z,Y^\circ)\le n^{1/2}\|w^\circ\|_{2,q}\quad\text{and}\quad
\|w^{**}\|_{2,q}\le \rho_{q,D}(z,Y),$$
and  \eqref{pqrb2} follows from \eqref{pqrb1}.

Let now $z= y_1$. By the same arguments we obtain the estimate
$$
\|\tilde w\|_{2,q}=\Big(\sum_{j=2}^{m} \tilde w_j^2\|y_j-z\|_2^{2q}\Big)^{1/2}
\le (1+\|R_1^{-1}R_2\|_2^2)^{1/2}\|\tilde w^{**}\|_{2,q},$$
where $\tilde w^{**}=[\tilde w_j^{**}]_{j=2}^m:=\Theta P\tilde v^{**}$ for the minimal 2-norm solution $\tilde v^{**}$ of $\tilde Av=b'$.
Since $\|y_1-z\|_2=0$, we have $\|w^\circ\|_{2,q}=\|\tilde w\|_{2,q}$.
Moreover, in view of \eqref{p1cond},
$$
A=\begin{bmatrix} 
1 & \bone^T\\ 
0 & A'
\end{bmatrix},\qquad \bone:=[1\cdots 1]^T\in\RR^{m-1}.$$
Hence, \eqref{Ab} is equivalent to
$$
\sum_{j=1}^mw_j=c_0(z),\quad A'[w_j]_{j=2}^m=b',$$
which implies that the $\ell_2$-minimal weight vector $w^{**}$ that minimizes 
$$
\sum_{j=1}^{m} w_j^2\|y_j-z\|_2^{2q}=\sum_{j=2}^{m} w_j^2\|y_j-z\|_2^{2q}$$
subject to \eqref{Ab} is given by
$$
w^{**}=\begin{bmatrix} 
c_0(z)-\sum_{j=2}^{m}\tilde w_j^{**} \\ 
\tilde w^{**} 
\end{bmatrix},$$
since $\tilde w^{**}$ %
minimizes 
$$
\sum_{j=2}^{m} w_j^2\|y_j-z\|_2^{2q}\qquad\text{subject to}\quad A'[w_j]_{j=2}^m=b'.$$
This implies $\|w^{**}\|_{2,q}=\|\tilde w^{**}\|_{2,q}$, and \eqref{pqrb1} folows.
The bound \eqref{pqrb2} is inferred from \eqref{pqrb1} by the same argument as before.
\end{proof}

\bibliographystyle{abbrv} %
\bibliography{meshless}

\end{document}